\def\nfrac#1#2{{\textstyle\frac{#1}{#2}}}
\def\dfrac#1#2{\lower0.15ex\hbox{\large$\frac{#1}{#2}$}}
\newtheorem{theorem}{Theorem}[section]
\newtheorem{lemma}[theorem]{Lemma}
\newtheorem{corollary}[theorem]{Corollary}
\numberwithin{equation}{section}
\def\kvec{{\boldsymbol{k}}}
\def\xvec{{\boldsymbol{x}}}
\def\kmax{k_{\mathrm{max}}}
\def\xmax{x_{\mathrm{max}}}
\def\dmax{d_{\mathrm{max}}}
\def\Hrk{\mathcal{H}_r(\kvec)}
\def\Hrkr{\mathcal{H}_r (k,n)}
\def\Hrkx{\mathcal{H}_r(\kvec,X)}
\def\Hrkxx{\mathcal{H}_r(\kvec-\xvec,X)}
\def\E{\mathbb{E}}
\def\P{\mathbb{P}}
\def\Mk{M(\kvec)}
\def\Mkx{M(\kvec-\xvec)}
\def\Mtkx{M_2(\kvec-\xvec)}
\def\Mtk{M_2(\kvec)}
\title{Enumerating sparse uniform hypergraphs \\ with given degree sequence and forbidden edges\thanks{Supported by the Australian Research Council grant DP140101519.}}
\author{
Haya S.~Aldosari \qquad  Catherine Greenhill \\
\small School of Mathematics and Statistics\\[-0.8ex]
\small UNSW Sydney\\[-0.8ex]
\small Sydney NSW 2052, Australia\\
\small \texttt{h.aldosari@student.unsw.edu.au} \quad \texttt{c.greenhill@unsw.edu.au}
}
\date{9 November 2018}  
\begin{document}
\maketitle

\begin{abstract}
For $n\geq 3$ and $r=r(n) \geq 3$, let $\kvec =\kvec(n)=(k_1, \ldots, k_n)$ be a
sequence of non-negative integers with sum $\Mk=\sum_{j=1}^{n} k_j$. 
We assume that $\Mk$ is divisible by $r$ for infinitely many values of $n$, and restrict
our attention to these values. 
Let $X=X(n)$ be a simple $r$-uniform hypergraph on the vertex set $V=\{v_1,v_2, \ldots, v_n\}$  
with $t$ edges. 
We denote by $\Hrk$ the set of all simple 
$r$-uniform hypergraphs on the vertex set $V$ with degree sequence $\kvec$,
and let  $\Hrkx$ be the set of all hypergraphs in $\Hrk$ which contain no edge of $X$. 
We give an asymptotic enumeration formula for the size of $\Hrkx$. 
This formula holds when $r^4 \kmax^3=o(\Mk)$,
$t\, \kmax^{3}\, =o(\Mk^2)$ and $r\,t\,\kmax^4 = o(\Mk^3)$. 
Our proof involves the switching method. 

As a corollary, we obtain an asymptotic formula
for the number of hypergraphs in $\Hrk$ which contain every edge of $X$. We apply
this result
to find asymptotic expressions for the expected number of perfect matchings and loose
Hamilton cycles in a random hypergraph in $\Hrk$ in the regular case.

\end{abstract}
\vspace*{1\baselineskip}

\section{Introduction}
Hypergraphs are increasingly used to model complex discrete systems in many areas,
including ecology~\cite{LBAA}, quantum computing~\cite{MTH}, social networks~\cite{AMPS}, 
computer science~\cite{DBRL12}, medicine~\cite{ZZKKW} and chemistry~\cite{KHT}.
However, there are relatively few asymptotic enumeration results for hypergraphs.

To describe our results we need some notation. For $n \geq 3$, let 
$\kvec=\kvec(n) =(k_1, \ldots, k_n)$ be a sequence of non-negative integers and  define
$\Mk=\sum_{i=1}^{n} k_i$.  A \emph{hypergraph} is a pair $(V,E)$ where  
$V=\{ v_1, v_2, \ldots , v_n \} $ is a set of vertices and $E$ is a multiset of 
multisubsets of $V$. The elements of $E$ are called \emph{edges}. An edge may contain a 
\emph{loop} at vertex $v\in V$ when the multiplicity of this vertex in this edge is more than one. 
A \emph{simple} hypergraph is  a hypergraph which has no loop and no repeated edge. 
A hypergraph is called $r$-\emph{uniform} if each edge contains $r$ vertices, where  $r$ is a positive 
integer. Every hypergraph in this paper has vertex set $V=\{v_1, \cdots, v_n\}$.

Now, let $X=X(n)$ be a simple $r$-uniform hypergraph on $V$ with degree sequence $\xvec$ of non-negative integers and edge set $\{ e_1, e_2, \ldots, e_t\}$. 
By a slight abuse of notation, we also write $X$ to denote its edge set. From now on, we refer to $X=\{ e_1, e_2, \ldots, e_t\}$ as the set of \emph{forbidden edges}. 

Let $\Hrk$ be the set of all simple $r$-uniform hypergraphs on $V$ with degree sequence $\kvec$, and $\Hrkx$ be the set of all hypergraphs in $\Hrk$ which contain no edge of $X$. 
The aim of this paper is to estimate the size of $\Hrkx$. In other words, we find an asymptotic expression for the number of simple $r$-uniform
hypergraphs with given degree sequence $\kvec$ which avoid the forbidden edges. 
Throughout this paper, we assume that $r$ divides $\Mk$ for infinitely many values of $n$ and take $n$ to infinity along these values. 

For a positive integer $k$,  let $\Hrkr$ denote the regular case of $\Hrk$, where all the vertices have the same degree $k$. A hypergraph chosen uniformly at random from $\Hrk$ will be referred to as a \emph{random hypergraph from} $\Hrk$, and similarly for $\Hrkr$.
We write $(a)_b$  for the falling factorial $a(a-1) \cdots (a-b+1)$, 
and define  $\Mtk = \sum_{i=1}^{n} (k_i)_2$.

\begin{theorem}\label{HkX} 
For $n \geq 3$, suppose that $r=r(n) \geq 3$. Let $\kvec=\kvec(n)= (k_1,\ldots, k_n)$ be a sequence of non-negative integers with maximum degree $\kmax$ and  sum $\Mk$. We assume that $r$ divides $\Mk$ for infinitely many values of $n$.
Let $X=X(n)$ be a given simple $r$-uniform hypergraph with degree sequence $\xvec$  
and with $t$ edges. Suppose that $r^4 \, \kmax^{3}=o\left(\Mk\right)$ and 
$\rho=o(1)$, where
\[
\rho=\frac{t\, \kmax^3}{\Mk^2} \, +\, \frac{ r\,t\, \kmax^4}{\Mk^3}.
\]
Then the probability that a random hypergraph from $\Hrk$ contains no edge of $X$ is $\exp\left(O(\rho)\right)$. Therefore, the number of simple $r$-uniform hypergraphs with degree sequence $\kvec$ containing no edge of $X$ is
\begin{align*}
|\Hrkx| =\frac{ \Mk !}{(\Mk/r)!  r!^{\Mk/r} \, \prod_{i=1}^{n} k_i !} \, \exp \left(
 \frac{ -(r-1) \Mtk}{ 2\Mk} + O\left( \frac{r^4 \, \kmax^3}{\Mk} + \rho \right)
\right).
\end{align*}
\end{theorem}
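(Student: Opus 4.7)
My plan is to work in the configuration model for $r$-uniform hypergraphs. I would place $k_i$ \emph{points} at vertex $v_i$, for a total of $\Mk$ points, and call a \emph{configuration} an unordered partition of these points into $\Mk/r$ groups of size $r$; the total number of configurations is $\Mk!/((\Mk/r)!\, r!^{\Mk/r})$. Collapsing each group to its multiset of host vertices turns a configuration into an $r$-uniform hypergraph of degree sequence $\kvec$, and each simple hypergraph in $\Hrkx$ is the image of exactly $\prod_{i=1}^{n} k_i!$ configurations (by permuting the points within each vertex). Thus I can estimate $|\Hrkx|$ by counting the configurations whose induced hypergraph is simple and contains no edge of $X$, then dividing by $\prod_{i=1}^{n} k_i!$.

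My first step would be a standard switching-method enumeration of the simple configurations (the $X=\emptyset$ case), from which one recovers $|\Hrk|$. A first-moment calculation on a random configuration shows the expected number of loops is $(r-1)\Mtk/(2\Mk)\,(1+o(1))$, giving the leading exponential correction $\exp(-(r-1)\Mtk/(2\Mk))$; refining this by switchings that pair configurations having $j$ defects (loops or repeated groups) with those having $j-1$ defects controls the fluctuation and yields the error factor $\exp(O(r^4\kmax^3/\Mk))$. To suppress the forbidden edges, for each $e\in X$ I would introduce a further switching: take a configuration in which some group realises $e$, pick a second group $f$ disjoint from this one, choose a point in each group, and swap the two chosen points, producing a configuration in which $e$ is no longer realised. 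The forward count is of order $r\Mk$, while the backward count is of smaller order in the sparsity parameters, so the per-edge ratio of configurations realising $e$ to those avoiding $e$ is bounded in terms of $\prod_{v\in e}k_v/\Mk^{r-1}$; summing these bounds over $e\in X$ and exponentiating delivers $\P(X\text{-free})=\exp(O(\rho))$, with the two terms of $\rho$ absorbing the main switching ratio and its dominant correction.

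The hard part will be the interaction between the three kinds of defects. When I perform the switching to remove a realisation of $e$, I might inadvertently create a loop, a repeated group, or a new realisation of another edge of $X$; conversely, the backward count must cope with configurations that already harbour loops or repeats. Bounding the proportion of switchings affected by these cross-interactions, and showing that the resulting slack is absorbed into the error $O(r^4\kmax^3/\Mk+\rho)$, will require a careful accounting that exploits the sparsity hypotheses $r^4\kmax^3=o(\Mk)$ and $\rho=o(1)$. Once both switching analyses are in place, multiplying the leading combinatorial factor $\Mk!/((\Mk/r)!\,r!^{\Mk/r}\prod_{i=1}^{n} k_i!)$ by the two exponential corrections gives the asserted formula for $|\Hrkx|$.
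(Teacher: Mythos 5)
Your overall architecture---bound the probability that a forbidden edge appears by a switching ratio, sum over the edges of $X$, exponentiate, and multiply by the count of $\Hrk$---is the same as the paper's, but the specific switching you propose is quantitatively too weak to deliver the stated error term. Swapping a \emph{single} point between the group realising $e$ and one other group has forward count of order $r\Mk$, as you say; however, the reverse count one can guarantee is of order $r\kmax^2$ (choose which vertex $v$ of $e$ is to be re-inserted, a point at $v$, and a group already containing points at the other $r-1$ vertices of $e$ --- such a group is determined by a single point at one of those vertices, so at most $\kmax$ choices). The resulting per-edge ratio is $O(\kmax^2/\Mk)$, and summing over the $t$ forbidden edges gives $O(t\,\kmax^2/\Mk)$, which is not $O(\rho)$ and need not even be $o(1)$ under the hypotheses (the assumption $\rho=o(1)$ permits $t$ nearly as large as $\Mk^2/\kmax^3$). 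The bound $\prod_{v\in e}k_v/\Mk^{r-1}$ you quote is the first-moment heuristic for the expected number of groups realising $e$; it does not follow from a one-point switching with worst-case reverse counting. To reach $\rho$ you must randomise \emph{two} vertices of the forbidden edge simultaneously, which is exactly what the paper does: it destroys $e_i$ by replacing two of its vertices $z_1,z_2$ with vertices $y_1\in f_1$, $y_2\in f_2$ drawn from two other edges, so that the forward count is $r(r-1)\Mk^2\bigl(1+o(1)\bigr)$ while the reverse count is at most $2r(r-1)\kmax^3$, giving $\xi_i=O\bigl(\kmax^3/\Mk^2+r\,\kmax^4/\Mk^3\bigr)$ per forbidden edge.

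There is also a structural difference that creates a second gap. The paper never reworks the configuration model: it takes the asymptotic formula for $|\Hrk|$ as a black box (Lemma~\ref{BliGre}) and performs the forbidden-edge switching directly between the sets $\mathcal{F}_i$ and $\mathcal{F}_i^c$ of \emph{simple} hypergraphs in $\Hrk$ containing or avoiding $e_i$; the union bound $\P(\cup_i\mathcal{F}_i)\le\sum_i|\mathcal{F}_i|/|\mathcal{F}_i^c|$ then treats each forbidden edge separately, so there are no interactions among loops, repeated edges and the various members of $X$ to control (illegal switchings that accidentally create a loop or a repeated edge are simply counted and shown to be a vanishing fraction, as in Lemma~\ref{illegalFswitch}). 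The ``hard part'' you flag---cross-interactions among three kinds of defects inside the configuration model---is therefore not merely hard but avoidable, and your plan leaves it unresolved. If you keep the configuration-model framework you must carry out that bookkeeping in full and also re-derive the $\exp\bigl(O(r^4\kmax^3/\Mk)\bigr)$ estimate for the simple configurations; switching within $\Hrk$ and citing the known enumeration sidesteps both tasks.
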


Our proof of this result uses the switching method, as described in Section~\ref{switch}.
Using more complicated switchings, it should be possible to prove an asymptotic enumeration
formula for an extended range of parameters, allowing a small but non-vanishing expected number of forbidden edges.
Theorem~\ref{HkX} is sufficient for our purposes, so we leave this extension for 
future work.

As a consequence of Theorem~\ref{HkX}, we can obtain an asymptotic formula for the
probability that a random element of $\Hrk$ contains all edges of $X$. 
This will be useful in applications, as illustrated in Section~\ref{s:applications}.

\begin{corollary}\label{PROBincEXC}
For $n\geq 3$ and $r=r(n) \geq 3$, let $\kvec$ and $\kmax$ be defined as above. 
Let $X=X(n)$ be a given simple $r$-uniform hypergraph with degree sequence $\xvec$ 
and with $t$ edges,
where $x_i \leq k_i$ for all $i=1,2,\ldots,n$. 
Define \[\beta =\frac{r^4 \kmax^3}{\Mkx} \, + \,\frac{t \, \kmax^3}{\Mkx^2} \, +\, 
  \frac{r\,t\, \kmax^4}{\Mkx^3}, \]
and assume that $\beta=o(1)$. Then 
the probability that a random hypergraph from $\Hrk$ contains every edge of $X$ is
\[
\frac{(\Mk/r)_{t} \, r!^{t}\, \prod_{i=1}^{n} (k_i)_{x_i} }{(\Mk)_{rt}}\,
  \exp\left(\frac{r-1}{2}\,\left(\frac{\Mtk}{\Mk}-  \frac{\Mtkx}{\Mkx} \right)+ O\left( \beta \right)\right). \\
 \]
\end{corollary}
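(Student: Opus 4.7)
The plan is to reduce the corollary to Theorem~\ref{HkX} via a simple bijection. Any hypergraph $H \in \Hrk$ which contains every edge of $X$ decomposes uniquely as $H = X \sqcup H'$, where $H' = H \setminus X$ is a simple $r$-uniform hypergraph on $V$ with degree sequence $\kvec - \xvec$ and no edge of $X$ (simplicity of $H$ forces $H'$ to share no edge with $X$). Conversely, every $H' \in \Hrkxx$ yields a valid $H \in \Hrk$ containing $X$. Hence the number of hypergraphs in $\Hrk$ containing every edge of $X$ equals $|\Hrkxx|$, and the probability we seek equals $|\Hrkxx|/|\Hrk|$.

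Next I would apply Theorem~\ref{HkX} twice: once to $|\Hrk|$ (with $X = \emptyset$, so $t = 0$ and $\rho = 0$), and once to $|\Hrkxx|$. The degree sequence $\kvec - \xvec$ has maximum entry at most $\kmax$ and sum $\Mkx = \Mk - rt$, while the forbidden set $X$ still has $t$ edges. The assumption $\beta = o(1)$ gives $r^4 \kmax^3 = o(\Mkx)$, and the last two terms of $\beta$ are exactly the quantity $\rho$ of Theorem~\ref{HkX} evaluated at $\kvec - \xvec$. Since $\Mkx \le \Mk$, the same bounds also hold with $\Mk$ in the denominator, so the hypotheses of Theorem~\ref{HkX} are satisfied for both applications, and all their resulting error terms are $O(\beta)$.

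Taking the ratio of the two formulas, the combinatorial prefactors collapse using
\[
\frac{\Mk!}{\Mkx!} = (\Mk)_{rt}, \qquad \frac{(\Mk/r)!}{(\Mkx/r)!} = (\Mk/r)_{t}, \qquad \frac{r!^{\Mk/r}}{r!^{\Mkx/r}} = r!^{t}, \qquad \frac{k_i!}{(k_i - x_i)!} = (k_i)_{x_i},
\]
producing the stated leading factor $(\Mk/r)_{t}\, r!^{t}\, \prod_{i=1}^n (k_i)_{x_i}/(\Mk)_{rt}$. The two leading exponents $-(r-1)\Mtk/(2\Mk)$ and $-(r-1)\Mtkx/(2\Mkx)$ subtract to give precisely $\tfrac{r-1}{2}(\Mtk/\Mk - \Mtkx/\Mkx)$, as required.

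The only real obstacle is bookkeeping: confirming that every error term produced by the two invocations of Theorem~\ref{HkX} fits inside $O(\beta)$. This reduces to the monotone comparisons $r^4 \kmax^3/\Mk \le r^4 \kmax^3/\Mkx$, $t\kmax^3/\Mk^2 \le t\kmax^3/\Mkx^2$, and $rt\kmax^4/\Mk^3 \le rt\kmax^4/\Mkx^3$, all of which follow from $\Mkx \le \Mk$. Once this monotonicity is recorded, the remainder of the argument is direct algebraic simplification.
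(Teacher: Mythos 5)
Your proposal is correct and follows essentially the same route as the paper: the bijection between hypergraphs in $\mathcal{H}_r(\kvec)$ containing $X$ and hypergraphs in $\mathcal{H}_r(\kvec-\xvec,X)$, followed by two applications of Theorem~\ref{HkX} (the denominator being the $X=\emptyset$ case, i.e.\ Lemma~\ref{BliGre}) and the monotonicity bound $M(\kvec-\xvec)\leq M(\kvec)$ to absorb all error terms into $O(\beta)$. The paper's proof is just a terser version of the same argument, writing $\dmax$ for the maximum entry of $\kvec-\xvec$ where you bound it directly by $\kmax$.
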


\begin{proof}
For a given $r$-uniform hypergraph $X$,  the number of hypergraphs with degree sequence $\kvec$ which contain every edge of $X$  is equal to the number of hypergraphs with degree sequence $\kvec-\xvec$ which contain no edge of $X$. Therefore, the probability that a random hypergraph  $\mathcal{H} \in \Hrk$  contains $X$ is 
\[  \P(X\subseteq \mathcal{H})=\frac{|\Hrkxx|}{|\Hrk|} .\] \\
This probability can be computed using Theorem~\ref{HkX}, leading to the stated expression with
error term given by 
 \begin{align*}
 &\frac{r^4 \kmax^3}{\Mk} \,  +\, \frac{r^4 \dmax^3}{\Mkx} \, + \, \frac{t\,\dmax^3}{\Mkx^2} \, +\, 
\frac{ r\, t\, \, \dmax^4}{\Mkx^3}  
  =O(\beta),
 \end{align*}
 where $\dmax=\max \{ k_j -x_j\,: j=1, \ldots, n\}$.
\end{proof}
 \subsection{History}
 
This section describes some previous studies on enumeration of some classes of graphs and hypergraphs with various restrictions.

For sparse graphs,  McKay~\cite{Mck85} established the first result on the number of simple graphs with given degree sequence $\kvec$ avoiding a certain set of  edges. His result holds when $\kmax \left(\kmax +\xmax\right) =o(M)$, 
where $\xmax$ is the maximum degree in $\xvec$. There are also other studies on the asymptotic enumeration of graphs with given degrees, see for example~\cite{BarHar13,LieWor17,MckWor91}, but less work has been done on forbidding a given set of edges.

In the dense regime, McKay~\cite{Mck11} found an asymptotic enumeration formula for simple graphs with given degree 
sequence $\kvec$ avoiding a certain set of edges $X$. This formula holds when the average degree is roughly linear,
the degree sequence is close to uniform and $|X|$ is roughly linear in $n$: see~\cite{Mck11} for more details.
His proof uses the saddle-point method.

There are few asymptotic enumeration results for simple $r$-uniform hypergraphs. 
The regular case was considered
by Dudek et al.~\cite{DFRS13} in the sparse regime and by Kuperberg et al.~\cite{KLP17} in the dense regime. 

For uniform hypergraphs with irregular degree sequences, Blinovsky and Greenhill~\cite{BliGre16a} 
estimated the cardinality of $\Hrk$ when $\kmax^3 = o(\Mk)$, treating $r$ as constant. This result has been extended 
in~\cite[Corollary~2.3]{BliGre16b} to consider slowly-growing $r$. Our result (Theorem~\ref{HkX}) relies on 
this formula, restated below.

 \begin{lemma}\emph{\cite[Corollary 2.3]{BliGre16b}}\,  \label{BliGre}
For $n\geq 3$, let $r=r(n), \,\kvec=\kvec(n), \text{ and } \Mk$ be defined as in Theorem~\ref{HkX}. 
Suppose that $\Mk \to\infty$ and $r^4\kmax^3 = o(\Mk)$ as $n\rightarrow \infty$. Then
\[
|\Hrk| =   \frac{ \Mk !}{(\Mk/r)! r!^{\Mk/r} \prod_{i=1}^{n} k_i !} \exp \left(
 \frac{ -(r-1) \Mtk}{ 2\Mk} +O\left(\frac{r^4 \kmax^{3}}{ \Mk} \right) \right).
\]
\end{lemma}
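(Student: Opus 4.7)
The plan is to establish this via the configuration (pairing) model for $r$-uniform hypergraphs, in the style of Bollob\'as, combined with the switching method to control defects. Take $\Mk$ labelled points partitioned into cells $W_1,\ldots,W_n$ with $|W_i|=k_i$, and define a \emph{configuration} to be an unordered partition of these points into $\Mk/r$ blocks of size $r$; the total number of configurations is $C:=\Mk!/\bigl((\Mk/r)!\,r!^{\Mk/r}\bigr)$. Each block projects to a multiset of $r$ vertices, and a configuration is called \emph{simple} when no block contains two points of the same cell (no \emph{loop}) and no two blocks project to the same $r$-set (no \emph{repeated edge}). Every simple hypergraph in $\Hrk$ lifts to exactly $\prod_i k_i!$ configurations, so
\[
|\Hrk|=\frac{C}{\prod_{i=1}^n k_i!}\,\P(\text{random configuration is simple}),
\]
and it remains to show that $\P(\text{simple})=\exp\bigl(-(r-1)\Mtk/(2\Mk)+O(r^4\kmax^3/\Mk)\bigr)$.

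The leading exponent is the first moment of the loop count. For any unordered pair of distinct points in the same cell, the probability they share a block under the uniform configuration distribution is $(r-1)/(\Mk-1)$; summing over all such pairs gives $\E[L]=(r-1)\Mtk/(2(\Mk-1))$, which equals $(r-1)\Mtk/(2\Mk)$ up to an error absorbed into $O(r^4\kmax^3/\Mk)$, and a parallel computation shows that the expected number of repeated-edge pairs is absorbed into the same error under the hypothesis. To convert a first-moment estimate into an exponential I would use the switching method on loops. Let $N_j$ count configurations with exactly $j$ loops and no repeated edge. A loop-removing switching picks a loop (a block $b$ together with two points $u,u'\in W_i\cap b$), picks two further blocks $b',b''$ with distinguished points $v',v''$, and exchanges $u\leftrightarrow v'$ between $b$ and $b'$, and $u'\leftrightarrow v''$ between $b$ and $b''$. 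Counting forward switchings (of order $j\,\Mk^2$) against reverse switchings (of order $\binom{r}{2}\Mtk$), and bounding rejected switchings (those that inadvertently create a new loop or repeated edge) by $O(r^3\kmax^2/\Mk)$ per step, one obtains
\[
N_j/N_{j-1}=(\E[L]/j)\bigl(1+O(r^3\kmax^2/\Mk)\bigr).
\]
Iterating up to $j$ of order $\log\Mk$, summing the resulting Poisson-like series, and handling repeated edges by an analogous switching, gives $\P(\text{simple})=\exp(-\E[L])\bigl(1+O(r^4\kmax^3/\Mk)\bigr)$.

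The main obstacle is carrying out the switching bookkeeping uniformly in a growing $r$: each switching disturbs an $O(r)$-sized local neighbourhood, and the number of ways it can accidentally create a new loop or repeated edge involves powers of $r$ and $\kmax$ that must be tracked precisely, since they combine to produce the announced $r^4\kmax^3/\Mk$ error. A secondary issue is concentration of $L$: beyond $j=\Theta(\log\Mk)$ the switching ratio $\E[L]/j$ drops below one and forces super-exponential decay of $N_j$, so truncation of the Poisson-like sum is justified without extra work. Combining the switching estimate with the counting identity above and simplifying exponents yields the stated formula for $|\Hrk|$.
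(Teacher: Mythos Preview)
The paper does not prove this lemma at all: it is quoted verbatim as \cite[Corollary~2.3]{BliGre16b} and used as a black box in the proof of Theorem~\ref{HkX}. So there is no ``paper's own proof'' to compare your attempt against; the present paper simply imports the formula.

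That said, your outline is the right shape for how the cited result is actually obtained in~\cite{BliGre16a,BliGre16b}: configuration model, identify $|\Hrk|$ with $C/\prod_i k_i!$ times the probability of simplicity, then use switchings to show the simplicity probability is $\exp(-(r-1)\Mtk/(2\Mk)+O(r^4\kmax^3/\Mk))$. A few points worth flagging if you intend to flesh this out. First, your claimed per-step error $O(r^3\kmax^2/\Mk)$ for the loop switching, and the assertion that repeated edges contribute only inside $O(r^4\kmax^3/\Mk)$, both require careful case analysis when $r$ grows; in particular the repeated-edge expectation is of order $(r!)^2\binom{\Mk/r}{2}/(\Mk)_{2r}$ times a sum over vertex $r$-sets, and showing this is absorbed by the stated error is not a one-line check. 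Second, the switching you describe moves two points out of the looped block into two other blocks; you must also verify that the reverse count is genuinely $\binom{r}{2}\Mtk$ up to the same relative error, which again needs the growing-$r$ bookkeeping you allude to. These are exactly the details handled in~\cite{BliGre16a,BliGre16b}, so your sketch is a fair summary of that argument rather than something to compare with the current paper.
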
 
In recent work, Espuny D{\'i}az et al.~\cite{EFKO18} 
estimated the probability that a random hypergraph from $\Hrkr$ contains a fixed set of edges $X$. They also gave a formula for the expected number of copies of $X$ in a random hypergraph from $\Hrkr$,  
when $r\geq 3$ is fixed and $k$ satisfies $k = \omega(1)$ and $k=o(n^{r-1})$.
  To the best of our knowledge, there are no other results on asymptotic enumeration of 
uniform hypergraphs with given degree sequence $\kvec$ and forbidden edges: 
in particular, there are no prior results when $\kvec$ is irregular or $\kmax=O(1)$.

\subsection{Structure of our argument}

We now outline our argument, and then describe the structure of the paper. 

Recall that $X=\{e_1,\ldots, e_t\}$.
For $i=1,\ldots, t$, let $\mathcal{F}_i \subseteq \Hrk$ be the set of hypergraphs in $\Hrk$ which contain the 
edge $e_i$.  Define $\mathcal{F}=\cup_{i=1}^{t} \mathcal{F}_i $ and observe that $\mathcal{F}^c$ is the set of hypergraphs in $\Hrk$ which contain no edges of $X$. Define $\xi_i = |\mathcal{F}_i|/|\mathcal{F}_{i}^{c}|$ for $i=1,\ldots, t$. Then, for a random hypergraph from $\Hrk$,
\begin{align*}
\P(\mathcal{F}) \leq \sum_{i=1}^{t} \P(\mathcal{F}_i) \leq \sum_{i=1}^{t} \xi_i .
\end{align*}
If $\sum_{i=1}^{t}  \xi_i  =o(1)$ then
\begin{align*}
1\geq \P(\mathcal{F}^c) \geq 1-\sum_{i=1}^{t} \xi_i =1-o(1),
\end{align*}
and hence
\begin{align}\label{Rt}
\P(\mathcal{F}^c)=\exp\left(- \sum_{i=1}^t \xi_i + O\left(\biggl(\sum_{i=1}^{t} \xi_i \biggr)^2\right)\right).
\end{align}

In Section~\ref{switch} we use the method of switchings to obtain an upper bound on $\xi_i$. 
The proof of Theorem~\ref{HkX} is completed in Section~\ref{proofth1}, using Lemma~\ref{BliGre}. 
In Section~\ref{s:applications} we present two applications of Corollary~\ref{PROBincEXC}, 
giving asymptotic expressions for
the expected number of perfect matchings
and the expected number of loose Hamilton cycles in a random hypergraph in $\Hrkr$,
when $r^4 k^2=o(n)$.

\section{The switchings}\label{switch}
Let $e_i \in X$ be given. 
We will now define and analyse a switching operation in order to obtain an upper bound on $\xi_i$.

\subsection{Forward switching}

Suppose that $G^*$ is a hypergraph in $\mathcal{F}_i$. 
Define $S^*=S^*(G^*,i) $ to be the set of all  $6$-tuples $(z_1,z_2,y_1,y_2,f_1,f_2)$ defined as follows:
\begin{itemize}
\item $z_1, z_2, y_1,y_2 $ are distinct vertices from $V$,
\item $e_i, f_1, f_2 $ are distinct edges of $G^*$, and
\item $z_1, z_2 \in e_i $ and $y_j \in f_j $ for $j= 1,2$.
\end{itemize}
Let $G$ be a hypergraph resulting from a forward switching operation on $G^*$  determined by the 
$6$-tuple $(z_1,z_2,y_1,y_2,f_1,f_2)$.  That is,
  \[
  G = \left( G^* \setminus \{e_i, f_1, f_2\} \right) \cup \{g, g_1, g_2 \},\]
   where $g = \left( e_i \setminus  \{z_1,z_2\} \right) \cup  \{y_1,y_2\}$  and 
$g_j = \left( f_j \setminus  \{y_j\}\right) \cup  \{z_j\}$, for $j =1,2$.
This switching is illustrated in Figure~\ref{switching}, following the arrow from left to right. 

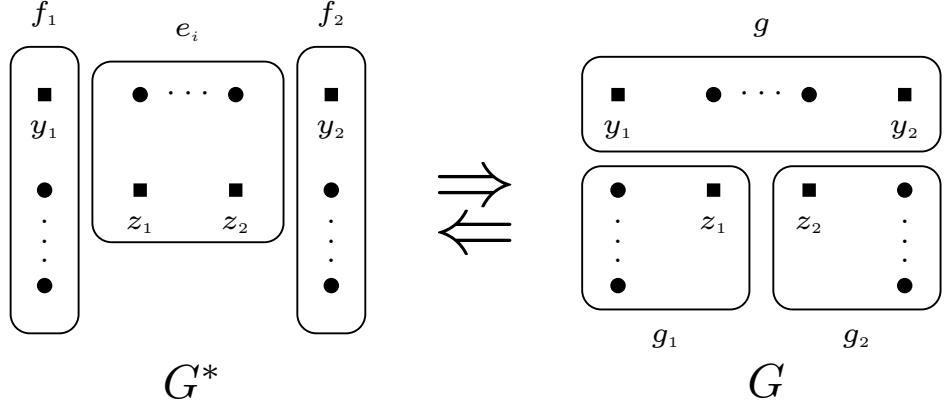
\begin{figure}[ht!]
\begin{center}
\resizebox{0.8\linewidth}{!}{
\begin{tikzpicture}[squarep/.style={draw,rectangle,inner sep=1.2pt,fill=black},
dot/.style={draw, circle, inner sep=1pt, fill=black},
column/.style={rectangle, draw, 
    text height=40pt, text centered, rounded corners, minimum height=60pt, minimum width=.5cm},
square/.style={rectangle, draw, 
    text height=30pt, text centered, rounded corners, minimum height=30pt, minimum width=40pt},
    squarer/.style={rectangle, draw, 
    text height=0pt, text centered, rounded corners, minimum height=30pt, minimum width=35pt},
    row/.style={rectangle, draw, 
    text height=12pt, text centered, rounded corners, minimum height=.5cm, minimum width=75pt}]

 \node[dot]  (u1) {};
  \node[squarep,draw,label=below:\tiny{$z_{\scaleto{1}{2pt}}$}] at ([yshift=-20pt]u1)  (z1) {};
  \node[dot] at ([xshift=20pt,yshift=20pt]z1) (u2){};
  \node[label=below:\tiny{$\cdots$}] at ([xshift=10pt,yshift=10pt]u1) {};
 \node[squarep,label=below:\tiny{$z_{\scaleto{2}{2pt}}$}] at ([xshift=20pt]z1)  (z2) {};

\node[squarep,label=below:\tiny{$y_{\scaleto{1}{2pt}}$}] at ([xshift=-20pt,yshift=20pt]z1) (y1) {};

\node[squarep,label=below:\tiny{$y_{\scaleto{2}{2pt}}$}] at ([xshift=20pt]u2) (y2) {};

\node[dot] at ([yshift=-20pt]y1) (f1u1) {};
\node[label=below:\tiny{$\vdots$}] at ([yshift=8pt]f1u1) {};
\node[dot] at ([yshift=-20pt]f1u1) (f1u2) {};

\node[dot] at ([yshift=-20pt]y2) (f2u1) {};
\node[label=below:\tiny{$\vdots$}] at ([yshift=8pt]f2u1) {};
\node[dot] at ([yshift=-20pt]f2u1) (f2u2) {};
\node [column,label={above:\tiny{$f_{\scaleto{1}{2pt}}$}}] at ([yshift=-20pt]y1) (f1) {};
\node [column,label={above:\tiny{$f_{\scaleto{2}{2pt}}$}}] at ([yshift=-20pt]y2) (f2) {};
\node [square,label={above:\tiny{$e_{\scaleto{i}{2pt}}$}}] at ([xshift=10pt,yshift=-12pt]u1) (ei) {};

 \node[dot]  at([xshift=120pt]u1) (u1r) {};
 \node[squarep,draw,label=below:\tiny{$z_{\scaleto{1}{2pt}}$}] at ([yshift=-20pt]u1r)  (z1r) {};
  \node[dot] at ([xshift=20pt,yshift=20pt]z1r) (u2r){};
  \node[squarep,label=below:\tiny{$z_{\scaleto{2}{2pt}}$}] at ([xshift=20pt]z1r)  (z2r) {};
  \node[label=below:\tiny{$\cdots$}] at ([xshift=10pt,yshift=10pt]u1r) {};
\node[squarep,label=below:\tiny{$y_{\scaleto{1}{2pt}}$}] at ([xshift=-20pt,yshift=20pt]z1r) (y1r) {};

\node[squarep,label=below:\tiny{$y_{\scaleto{2}{2pt}}$}] at ([xshift=20pt]u2r) (y2r) {};
  \node[dot] at ([yshift=-20pt]y1r) (f1u1r) {};
\node[label=below:\tiny{$\vdots$}] at ([yshift=8pt]f1u1r) {};
\node[dot] at ([yshift=-20pt]f1u1r) (f1u2r) {};

\node[dot] at ([yshift=-20pt]y2r) (f2u1r) {};
\node[label=below:\tiny{$\vdots$}] at ([yshift=8pt]f2u1r) {};
\node[dot] at ([yshift=-20pt]f2u1r) (f2u2r) {};
\node [row,label={above :\tiny{$g$}}] at ([xshift=30pt,yshift=-2pt]y1r) (g) {};
\node [squarer,label={below:\tiny{$g_{\scaleto{1}{2pt}}$}}] at ([xshift=-10pt,yshift=-10pt]z1r) (g1) {};
\node [squarer,label={below:\tiny{$g_{\scaleto{2}{2pt}}$}}] at ([xshift=10pt,yshift=-10pt]z2r) (g2) {};
\node at ([xshift=30pt]f2u1)  {\Large{$\Rightarrow$}};
\node at ([xshift=30pt,yshift=-10pt]f2u1)  {\Large{$\Leftarrow$}};
\node at ([xshift=11pt,yshift=-40pt]z1)  {\footnotesize{$G^*$}};
\node at ([xshift=11pt,yshift=-40pt]z1r)  {\footnotesize $G$};
\end{tikzpicture}
}
\caption{The forward and reverse switchings}
\label{switching}
\end{center}
\end{figure}

We say that the forward switching given by $(z_1,z_2,y_1,y_2,f_1,f_2)$ is \emph{legal}
if $G\in \mathcal{F}_{i}^{c}$, otherwise it is illegal.
The next lemma describes illegal forward switchings from $G^*$.


\begin{lemma}\label{illegalFswitch} 
Let $G^* \in \mathcal{F}_i$. Suppose that the 
$6$-tuple $(z_1,z_2,y_1, y_2, f_1, f_2) \in S^*$ results in an illegal forward switching from $G^*$. 
Then at least one of the following holds:
\begin{itemize}
\item[\emph{(I)}]At least one of $z_j, y_j$ belongs to both edges $e_i$ and  $f_j$, for some $j\in \{1,2\}$.

\item[\emph{(II)}] There is an edge $e \in  G^* \setminus \{e_i, f_1,f_2 \} $ such that either

\begin{itemize}
\item[\emph{(a)}] $e \cap e_i = e_i\setminus \{z_1, z_2 \}$ and  $e \cap f_j= \{y_j\}$  for $j=1,2$,  or

\item[\emph{(b)}]$e \cap f_j= f_j\setminus \{y_j\}$ and $e\cap e_i= \{ z_j\}$, for some $j\in \{1,2\}$. 

\end{itemize}
\item[\emph{(III)}] For some $j\in \{1,2\}$, $f_j \setminus \{y_j\} =e_i \setminus \{z_j\}$.
\end{itemize}  
\end{lemma}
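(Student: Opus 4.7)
The plan is a case analysis of how the output $G$ of the switching determined by $(z_1, z_2, y_1, y_2, f_1, f_2)$ can fail to lie in $\mathcal{F}_i^c$. Since the switching preserves every vertex degree, $G \notin \mathcal{F}_i^c$ occurs precisely when $G$ is not simple (some new edge contains a repeated vertex, or two edges of $G$ coincide) or $G$ still contains the edge $e_i$. I would show that each such failure implies at least one of (I)--(III).

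First I would dispose of the loop cases. Only $g, g_1, g_2$ can contain a repeated vertex, since the rest of $G$ is inherited unchanged from $G^*$. Using that $z_1, z_2, y_1, y_2$ are pairwise distinct and that $e_i, f_1, f_2$ are themselves loopless, a direct check gives that $g = (e_i \setminus \{z_1, z_2\}) \cup \{y_1, y_2\}$ has a repeat iff some $y_j$ already lies in $e_i$, and $g_j = (f_j \setminus \{y_j\}) \cup \{z_j\}$ has a repeat iff $z_j \in f_j$; both are instances of (I). For the case $e_i \in G$, the edge $e_i$ must coincide with one of the new edges; equating vertex sets rules out $g = e_i$ by the distinctness of the $y$'s and $z$'s, while $g_j = e_i$ combined with $z_j \in e_i$ rearranges to $f_j \setminus \{y_j\} = e_i \setminus \{z_j\}$, which is exactly (III).

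The bulk of the work is the repeated-edge case. Either a new edge equals some old edge $e \in G^* \setminus \{e_i, f_1, f_2\}$, or two of the new edges coincide. In the former subcase I would compare vertex sets: $g = e$ forces $e$ to contain $e_i \setminus \{z_1, z_2\}$ and to meet each $f_j$ in at least $\{y_j\}$, which (once the loop case (I) is excluded) collapses to the intersection equalities in (II)(a); similarly $g_j = e$ forces $e$ to contain $f_j \setminus \{y_j\}$ and the vertex $z_j$, matching (II)(b). For the latter subcase, $g = g_j$ is immediately impossible because $z_j \in g_j$ but $z_j$ is removed from $e_i$ and differs from $y_1, y_2$; and $g_1 = g_2$ can be shown to force $z_1 \in f_2$ and $z_2 \in f_1$, which together with the distinctness hypotheses reduces to a configuration already covered by (I) or (II).

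The main obstacle is this last part: keeping the edge-coincidence bookkeeping clean and verifying that every exceptional subcase really does fall under the precise statement of (I)--(III). The loop and $e_i$-containment checks, by contrast, reduce to short set-theoretic manipulations using only the distinctness of the six entries of the tuple.
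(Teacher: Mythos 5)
Your overall decomposition --- $G\notin\mathcal{F}_i^c$ because of a loop, a repeated edge, or because $e_i$ survives --- is exactly the paper's, and your treatment of the loop case, of the case $e_i\in G$, and of the subcase where a new edge duplicates an edge of $G^*\setminus\{e_i,f_1,f_2\}$ matches the paper's proof step for step. You go further than the paper in one respect: you explicitly raise the possibility that two of the \emph{new} edges coincide, a subcase the paper's proof silently skips (it passes directly from ``$g_j$ is a multiple edge of $G$'' to ``$g_j$ also belongs to $G^*\setminus\{e_i,f_1,f_2\}$'', which presumes the duplicate is an old edge). Your elimination of $g=g_j$ via $z_j\in g_j\setminus g$ is correct.

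The genuine gap is your final claim that $g_1=g_2$ ``reduces to a configuration already covered by (I) or (II)''. It does not. From $g_1=g_2$ you correctly deduce $z_1\in f_2$ and $z_2\in f_1$, but (I) requires $z_j\in e_i\cap f_j$ with \emph{matching} index $j$, and (II) requires the existence of a further edge $e$ of $G^*$; neither follows. Concretely, with $r=3$ take $e_i=\{a,z_1,z_2\}$, $f_1=\{y_1,z_2,v\}$, $f_2=\{y_2,z_1,v\}$ with $a,v,y_1,y_2,z_1,z_2$ pairwise distinct. This $6$-tuple lies in $S^*$, and the switching produces $g_1=g_2=\{z_1,z_2,v\}$, so $G$ has a repeated edge and the switching is illegal; yet for a suitable choice of the remaining edges of $G^*$ none of (I), (II), (III) holds (for (III), note $f_1\setminus\{y_1\}=\{z_2,v\}\neq\{a,z_2\}=e_i\setminus\{z_1\}$ since $v\neq a$). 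So the reduction you assert is false, and in fact the lemma as stated omits this case. The omission is harmless downstream --- a tuple with $g_1=g_2$ forces $f_2\supseteq\bigl(f_1\setminus\{y_1,z_2\}\bigr)\cup\{z_1\}$, so there are only $O\bigl(r^2(r-1)\kmax^2\bigr)$ such tuples, negligible against $r(r-1)\Mk^2$ in Lemma~\ref{eq1} --- but as a proof of the lemma as literally stated your last step cannot be completed; one would have to add a fourth alternative to the case list (or count these tuples separately).
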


\begin{proof}
Suppose that $(z_1,z_2,y_1,y_2,f_1,f_2)$ is a $6$-tuple in $S^*$ which gives  an illegal switching on $G^* \in \mathcal{F}_i$. This means that the resulting hypergraph $G$ does not belong to $\mathcal{F}_{i}^{c}$. Then we have at least one of the following situations:
\begin{itemize}
\item[$\circ$] {\bf $G$ contains a loop.}\ This implies that at least one new  loop has been created accidentally 
at one of the vertices $z_j, y_j$ for some $j\in \{1,2\}$. If $g_j$ contains  a loop at $z_j$ for some $j \in \{1,2\}$, then we have $z_j \in f_j \cap e_i$ in $G^*$. Therefore, (I) holds. Similarly, if $g$ has a loop at  $y_j$ for some $j \in \{1,2\}$ then $y_j \in f_j \cap e_i$ in $G^*$, so (I) holds.

\item[$\circ$] {\bf $G$ contains a repeated edge.}\ Then the repeated edge must involve one of the new edges $g, g_1, g_2$, since $G^* \in \mathcal{F}_i$ is simple. Suppose that $g$ has multiplicity greater than one in $G$. Then $g$ also belongs to 
$G^* \setminus \{e_i, f_1, f_2\}$, as an edge of multiplicity~1. Hence, $g\setminus \{y_1,y_2\} =e_i \setminus \{z_1, z_2\}$. In addition, $g$ intersects  both $f_1, f_2$ in $y_1, y_2$, respectively. Hence (II)(a) holds. 
Similarly, if $g_j$ is a multiple edge in $G$ for some $j\in \{1,2\}$ then $g_j$ also belongs to 
$G^* \setminus \{e_i, f_1, f_2 \}$, and (II)(b) holds.

\item[$\circ$] {\bf $G$ contains the edge $e_i$.}\   Since $G^*$ is simple and $z_1, z_2, y_1, y_2$ are distinct vertices, 
either $g_1 = e_i$ or $g_2 = e_i$. Then $g_j \setminus \{z_j\}$ is the same set as $e_i \setminus \{z_j\}$, for some $j \in \{1,2\}$. From the definition of $g_j$ we also have $g_j \setminus \{z_j\}=f_j \setminus \{y_j\} $. Therefore (III) holds.
\end{itemize}
This completes the proof.
\end{proof}

Next, we analyse forward switchings.

\begin{lemma}\label{eq1}
Let $G^*$ be a hypergraph in $\mathcal{F}_i$ and let $S^*=S^*(G^*,i)$ be the set of $6$-tuples $(z_1,z_2,y_1,y_2,f_1,f_2)$ 
defined earlier. 
If $r^4 \, \kmax^3=o(\Mk)$ then the number of 6-tuples in $S^*$ which determine a  legal switching is 
 \begin{align*}
 r(r-1) \Mk^2 \, \left(  1+ O \left( \frac{ \kmax^2 +  r \,\kmax }{\Mk}  \right) \right).
 \end{align*}
\end{lemma}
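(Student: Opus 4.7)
The plan is to estimate the number of legal 6-tuples in $S^*$ by first computing an asymptotic formula for $|S^*|$ itself, then subtracting an upper bound on the illegal tuples using Lemma~\ref{illegalFswitch}. The leading contribution should be $r(r-1)\Mk^2$, and the illegal count should give the stated error term.

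For the main term I would fix an ordered pair $(z_1,z_2)$ of distinct vertices of $e_i$, giving $r(r-1)$ choices. For each such pair, the number of vertex-edge incidences $(y,f)$ with $y\in f$, $f\neq e_i$, and $y\notin\{z_1,z_2\}$ is $\Mk - r - (k_{z_1}-1) - (k_{z_2}-1) = \Mk + O(\kmax+r)$, since $\sum_{v} k_v=\Mk$ and the $r$ incidences in $e_i$ are excluded. Choosing two such incidences independently gives $(\Mk+O(\kmax+r))^2$ ordered pairs; from this I subtract the overlaps with $y_1=y_2$ (which contribute $O(\kmax\,\Mk)$, since $\sum_y k_y^2\le \kmax\,\Mk$) and with $f_1=f_2$ (which contribute $O(r\,\Mk)$, since each of the $\Mk/r$ edges supplies $r^2$ ordered vertex pairs). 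Multiplying by $r(r-1)$ yields $|S^*|=r(r-1)\Mk^2\bigl(1+O((\kmax+r)/\Mk)\bigr)$.

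Next I would bound the illegal tuples case by case. In case (I), either $z_j\in f_j$ or $y_j\in e_i$; either restriction forces one endpoint of an incidence to a specific vertex, producing $O(r^3\kmax\,\Mk)$ overall. In case (II)(a), the witness edge $e$ extends $e_i\setminus\{z_1,z_2\}$ (at most $\kmax$ such $e$), and $\{y_1,y_2\}=e\setminus e_i$ is then determined, with $O(\kmax^2)$ further choices for $(f_1,f_2)$, giving $O(r^2\kmax^3)$. In case (III), $f_j=(e_i\setminus\{z_j\})\cup\{y_j\}$, so only $O(\kmax)$ values of $y_j$ are viable, yielding $O(r^2\kmax\,\Mk)$. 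In case (II)(b), $e$ must contain $z_j$ and satisfy $e\cap e_i=\{z_j\}$ (at most $\kmax$ choices), and the assumption $z_j\notin f_j$ (else case (I) applies) forces $f_j\cap e=e\setminus\{z_j\}$, so $f_j$ is one of at most $\kmax$ edges extending this specific $(r-1)$-subset; with the other incidence contributing $O(\Mk)$, case (II)(b) gives $O(r^2\kmax^2\,\Mk)$. All contributions fit inside $r(r-1)\Mk^2\cdot O((\kmax^2+r\kmax)/\Mk)$, which matches the stated error.

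The main obstacle is case (II)(b). A naive bound would count up to $r\kmax$ edges sharing $r-1$ vertices with a fixed edge $e$ (one $(r-1)$-subset per omitted vertex), which is a factor $r$ too large for the claimed error. The key observation is that in the pure (II)(b) subcase one has $z_j\in e$ but $z_j\notin f_j$, so the common $(r-1)$-subset $e\cap f_j$ is forced to be $e\setminus\{z_j\}$; only the co-degree of this single subset, bounded by $\kmax$ using the simplicity of $G^*$, enters the count. Once this refinement is made, the error contributions combine under the hypothesis $r^4\kmax^3=o(\Mk)$ to give the asserted asymptotic.
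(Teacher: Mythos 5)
Your proof is correct and follows essentially the same route as the paper: an asymptotic count of $|S^*|$ followed by case-by-case upper bounds on the illegal tuples of Lemma~\ref{illegalFswitch}, with every bound matching the paper's up to constant factors. Your refinement in case (II)(b) --- using $z_j\in e\setminus f_j$ (since case (I) is excluded) to pin down $e\cap f_j=e\setminus\{z_j\}$ and thereby avoid the extra factor of $r$ --- is precisely the detail that the paper's terse ``at most $r(r-1)\,\kmax^2\,\Mk$'' leaves implicit.
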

\begin{proof} 
From the definition of $S^*$, it is obvious that the number of 6-tuples which determine a legal forward switching on $G^*$ 
is bounded above by  $|S^*|$.  To find a lower bound on this number, we will subtract from $|S^*|$ an 
estimate for the number of 6-tuples which result in an illegal switching. 
These illegal 6-tuples are described in Lemma~\ref{illegalFswitch}. 

First, we will find an asymptotic expression for $|S^*|$. There are $r (r-1)$ choices for $(z_1,z_2)$, as a pair of
distinct vertices from $e_i$, and at most $\Mk^2$ choices for $(y_1,y_2,f_1,f_2)$. Therefore, 
\[
 |S^*| \leq r(r-1) \Mk^2.
 \]
Now we find a lower bound for $|S^*|$.  First we need to choose an edge $f_1\neq e_i$ and a vertex $y_1\in f_1$ 
such that $y_1\not\in\{ z_1,z_2\}$. The number of ways to choose $(y_1,f_1)$ is at least
\[ r \left(\frac{\Mk}{r} -1\right)- 2\kmax = \Mk \left( 1+ O\left(\frac{r+\kmax}{\Mk}\right) \right). \]
Next, the number of choices for $(y_2,f_2)$ such that $f_2 \notin \{e_i,f_1\}$,\,  
$y_2\in f_2$ and $y_2 \notin \{z_1,z_2,y_1\}$ is at least 
$ \Mk \left(1+ O\left(\nfrac{r+\kmax}{\Mk} \right)\right)$.

Combining the bounds of $|S^*|$, we have
\begin{align}
|S^*| &= r (r-1) \Mk^2 \left( 1+ O \left( \frac{r+\kmax}{\Mk} \right) \right).
\label{S-star-def}
\end{align}
Now, we estimate an upper bound for the number of 6-tuples in $S^*$ which satisfy some property in Lemma~\ref{illegalFswitch}.

For (I), suppose that $y_j  \in e_i \cap f_j$ for some $j\in \{1,2 \}$. There are $r(r-1)$  ways to choose $(z_1, z_2)$ and at most  $(r-2)\, \kmax \, \Mk$ choices for  $(y_1,y_2,f_1, f_2)$ satisfyings this condition. 
Similarly, if $z_j \in e_i \cap f_j $  for some $j\in \{1,2 \}$ then we have $r (r-1)$ choices for $(z_1,z_2)$ and at most 
$(r-1) \, \kmax \, \Mk$ choices for $(y_1,y_2,f_1, f_2)$. Therefore, the number of $6$-tuples in $S^*$ satisfying  
(I)  is at most 
\[ 2r(r-1)^2\kmax \Mk.\]

For (II)(a), suppose that there exists an edge $e \in G^* \setminus \{e_i, f_1,f_2 \} $ such that 
$e\cap e_i=e_i \setminus \{z_1,z_2\}$ and $e \cap f_j = \{y_j\}$ for $j= 1,2$. 
There are $r(r-1)$ choices for $(z_1,z_2)$ as distinct vertices in $e_i$.
Then there are at most $\kmax$ choices for $e$ and two ways to choose $(y_1,y_2)$, as these are the two vertices  in $e\setminus e_i$. 
We also have at most $\kmax^{2}$ choices  for $(f_1,f_2)$ as incident edges for $y_1, y_2$, respectively. 
Therefore, the number of 6-tuples in $S^*$ satisfying (II)(a) is at most
\[ 2r(r-1) \, \kmax^{3}.\]
 
For (II)(b),  suppose that there exists an edge $e \in G^* \setminus \{e_i, f_1,f_2 \}$ such that 
$e\cap f_j = f_j \setminus \{y_j\}$ and $e \cap e_i=z_j$ for some $j \in \{ 1,2\}$. 
Then the number of choices for the $6$-tuple satisfying (II)(b) is at most
\[ r(r-1) \, \kmax^2 \Mk.\]
  
For (III), suppose that  $f_j\setminus \{y_j\} =e_i \setminus \{z_j\} $ for some $j \in\{1,2\}$. 
Arguing as above, the number of 6-tuples in $S^*$ satisfying this condition is at most
\[ r(r-1)\,\kmax\, \Mk.\] 
 
Combining these cases shows that the number of 6-tuples in $S^*$ which give rise to an illegal switching is at most
\begin{align*} 
& 2r(r-1)^2\kmax \Mk +2r(r-1)\, \kmax^{3} + \, r(r-1)(\kmax +1 ) \, \kmax \Mk  \\
 &=r (r-1) \Mk^2 \, O\left(\frac{ (r+ \kmax) \kmax}{\Mk} +\frac{\kmax^3}{\Mk^2}\right)\\
  &=r (r-1) \Mk^2 \, O\left(\frac{ \kmax^2+ r \, \kmax}{\Mk} \right).
 \end{align*}
Subtracting this from (\ref{S-star-def})  completes the proof.
\end{proof}

\subsection{Reverse switching}

Let  $G \in \mathcal{F}^{c}_{i}$ be chosen at random and $S=S(G,i)$ be the set of all 
$6$-tuples $(z_1,z_2,y_1,y_2,g_1,g_2)$ defined as follows:
\begin{itemize}
\item $z_1, z_2, y_1,y_2 $ are distinct vertices in $V,$
\item  $g_1, g_2 $ are distinct edges of $G$,
\item $g_j \cap e_i =\{z_j \}$ for $j=1,2$, and
\item there is an edge $g \in G$ which contains $y_1, y_2 $ such that $g \cap e_i =e_i \setminus \{z_1,z_2 \}$.
\end{itemize}
A reverse switching on  $G$ operating by the $6$-tuple $(z_1,z_2,y_1,y_2,g_1,g_2) $ results in  a hypergraph $G^*$ defined by
\[
G^* = \left( G \setminus \{g, g_1, g_2\} \right) \cup \{e_i, f_1, f_2 \},
\]
 where $ f_j = \left( g_j \setminus  \{z_j\}\right) \cup  \{y_j\}, \text{ for } j =1,2$.
This reverse switching is illustrated in Figure~\ref{switching} by reversing the arrow. We say that the reverse switching is 
\emph{legal} if $G^* \in \mathcal{F}_i$.

Every 6-tuple which gives rise to a legal reverse switching belongs to $S$. 
Therefore, it is sufficient to obtain an upper bound on $|S|$ in order to upper-bound the number of legal reverse switchings. 
Since $z_1 , z_2 \in e_i$, there are at most $r (r-1)\, \kmax^{2} $ choices for $(z_1,z_2,g_1, g_2) $ such that $z_j \in g_j$ for $j=1,2$. Also we have at most $2 \kmax$ choices for $(y_1,y_2)$ such that these vertices belong to an edge  which intersects with $e_i$ in exactly $r-2$ vertices. Therefore, the number of legal reverse switchings which can be performed on $G$ is at most 
\begin{align}\label{upperbound}
2r(r-1) \kmax^{3} .
\end{align}
Now, we can complete the proof of our main result.

\subsection{Proof of Theorem~\ref{HkX}}\label{proofth1}
\begin{proof}
We conclude from  Lemma~\ref{eq1} and~(\ref{upperbound}) that

\begin{align}\label{xi}
\xi_i =\, \frac{|\mathcal{F}_i|}{|\mathcal{F}_{i}^{c}|} &\leq \frac{2 r\, (r-1) \, \kmax^{3}}{r(r-1) \Mk^2 \, 
  \left(  1+ O \left( \left(\kmax^2 + r \, \kmax\right)/\Mk   \right)\right)} \notag \\[6pt]
&=O \left( \frac{\kmax^3}{\Mk^2} \, +\, \frac{ r \, \kmax^4}{\Mk^3} \right).
\end{align}
The assumptions of Theorem~\ref{HkX} imply that 
\[  \sum_{i=1}^{t} \xi_i 
= O \left( \frac{t\, \kmax^3}{\Mk^2} \, +\, \frac{ r\,t\,\kmax^4}{\Mk^3} \right)
=o(1).\]
Therefore, by (\ref{Rt}) and (\ref{xi}),
\begin{align}\label{final Rt}
\P(\mathcal{F}^c)
 &= \exp\left(
O \left( \frac{t\, \kmax^3}{\Mk^2} \, +\, \frac{ r\,t\,\kmax^4}{\Mk^3} \right)
  \right).
\end{align}
We complete the proof by multiplying (\ref{final Rt}) by the value of $|\Hrk|$ given by Lemma~\ref{BliGre}. 
\end{proof}

\section{Some applications}\label{s:applications}

Corollary~\ref{PROBincEXC}  can be used to estimate the expected number of substructures of a random element of $\Hrk$.
To illustrate this,   
we estimate the number of perfect matchings and loose Hamilton cycles in the regular setting.

\subsection{Perfect matchings}
 
 For $n \geq 3$, suppose that $r=r(n) \geq 3$ is a factor of $n$. When $n$ is divisible by $r$, a set of $n/r$ edges of $H \in \Hrkr$ which covers  all the vertices of $H$ exactly once is called a \emph{perfect matching} in $H$.  Let $\mathcal{H}$ be chosen uniformly at random from $\Hrkr$, and let $Z$ be the number of perfect matchings in  $\mathcal{H}$. 
The number of perfect matchings in $k$-regular $r$-uniform hypergraphs was analysed by Cooper et al.~\cite{CFMR96} when 
$r\geq 3$ and $k\geq 2$ are fixed integers.  
Combining~\cite[Lemma 3.1]{CFMR96} with~\cite[(6.18)]{CFMR96} implies that under these conditions,
the expected number of perfect matchings in a 
random hypergraph in $\Hrkr$ is
 \begin{align}\label{EZpm}
 \E(Z) = (1+o(1))\, e^{(r-1)/2} \,\sqrt{r} \left(k \left(\frac{k-1}{k}\right)^{(r-1)(k-1)} \right)^{n/r}.
  \end{align}
In the following corollary, we show that the same formula holds when $k$ and $r$  grow 
sufficiently slowly as $n \rightarrow \infty$. 
 \begin{corollary}\label{EZ}
 For a positive integer $n \geq 3$, let $r =r(n) \geq 3$ be such that $r$ divides $n$ for infinitely many values of 
$n$. Let $k=k(n) \geq 2$ and let $Z$ denote the number of perfect matchings in a hypergraph chosen randomly from 
$\Hrkr$. Then, when $r^4\, k^2=o(n)$,   
 \[
 \E(Z) = \sqrt{r} \left(k \left(\frac{k-1}{k}\right)^{(r-1)(k-1)} \right)^{n/r} \exp \left(\frac{r-1}{2} + O\left(\frac{r^4 k^2 }{n} \right)\right).
 \]
 \end{corollary}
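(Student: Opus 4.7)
\textbf{Proof proposal for Corollary~\ref{EZ}.} By symmetry, every perfect matching $M$ on $V$ is contained in the random $k$-regular hypergraph $\mathcal{H}$ with the same probability. Writing $N_{\mathrm{PM}}=n!/\bigl((n/r)!\,r!^{n/r}\bigr)$ for the total number of $r$-uniform perfect matchings on $V$, linearity of expectation gives
\[
\E(Z) \;=\; N_{\mathrm{PM}}\,\P(M\subseteq \mathcal{H})
\]
for any fixed perfect matching $M$. The plan is to evaluate $\P(M\subseteq \mathcal{H})$ using Corollary~\ref{PROBincEXC} with $X=M$, and then simplify.

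Here $\xvec=(1,1,\ldots,1)$, $t=n/r$, $\kmax=k$, $\dmax=k-1$, and $\Mk=nk$, $\Mkx=n(k-1)$, $\Mtk=nk(k-1)$, $\Mtkx=n(k-1)(k-2)$. The exponent inside Corollary~\ref{PROBincEXC} therefore collapses to
\[
\frac{r-1}{2}\left(\frac{\Mtk}{\Mk}-\frac{\Mtkx}{\Mkx}\right)=\frac{r-1}{2}\bigl((k-1)-(k-2)\bigr)=\frac{r-1}{2}.
\]
A routine check, using $k/(k-1)=O(1)$ because $k\geq 2$, shows that each of the three terms making up $\beta$ is $O(r^4 k^2/n)$, so the hypothesis $r^4 k^2=o(n)$ gives $\beta=o(1)$. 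Also $\prod_i (k_i)_{x_i}=k^n$, and $r!^t\cdot r!^{-n/r}=1$ on combining with $N_{\mathrm{PM}}$. Substituting yields
\[
\E(Z)=\frac{n!\,(nk/r)_{n/r}\,k^n}{(n/r)!\,(nk)_n}\;\exp\!\left(\frac{r-1}{2}+O\!\left(\frac{r^4 k^2}{n}\right)\right).
\]

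The remaining task is to show that the factorial ratio equals
\(
\sqrt{r}\bigl(k\bigl((k-1)/k\bigr)^{(r-1)(k-1)}\bigr)^{n/r}\) up to a multiplicative $1+O(1/n)$, which will be absorbed by the existing error. Expanding each factorial by Stirling's formula $m!=\sqrt{2\pi m}(m/e)^m(1+O(1/m))$, one checks that the linear ``$-m$'' pieces cancel identically and that the coefficient of $\ln n$ also vanishes. What remains from the $m\ln m$ parts is
\[
\frac{(r-1)n}{r}\bigl((k-1)\ln(k-1)-k\ln k\bigr)\;=\;-\frac{(r-1)n}{r}\ln k+\frac{(r-1)(k-1)n}{r}\ln\frac{k-1}{k},
\]
so combined with the factor $k^n$ this produces exactly $\bigl(k((k-1)/k)^{(r-1)(k-1)}\bigr)^{n/r}$. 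The six $\sqrt{2\pi m}$ prefactors combine to $\sqrt{r}$, since the ratio inside the square root simplifies to $r$. Putting these pieces together proves the corollary.

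The main technical obstacle is the Stirling bookkeeping, in particular verifying that the relative $O(1/m)$ errors from Stirling (with $m$ as large as $nk$ and as small as $n/r$) are dominated by the $O(r^4 k^2/n)$ error already present; this follows once one notes that $r/n \leq r^4 k^2/n$ under the hypothesis. Everything else is algebraic bookkeeping.
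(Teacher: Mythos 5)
Your proposal is correct and follows essentially the same route as the paper: symmetry plus linearity of expectation, an application of Corollary~\ref{PROBincEXC} to a fixed perfect matching (with the same computation of the exponent and of $\beta=O(r^4k^2/n)$), and a Stirling expansion of the resulting factorial ratio with relative error $O(r/n)$ absorbed into the main error term. Your Stirling bookkeeping is in fact spelled out in more detail than in the paper, and it checks out.
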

 \begin{proof}
 Let $\mathcal{H}$ be chosen uniformly at random from $\Hrkr$. 
Then
\begin{align}\label{E}
\E(Z)&= \sum_{X} \P(X\subseteq  \mathcal{H}), 
\end{align}
where the sum is over all possible perfect matchings $X$. 
Now let $X$ be a fixed perfect matching with $t=n/r$ edges.
Since $X$ is has degree sequence $\boldsymbol{x}=(1,1,\ldots, 1)$, we have
$\kvec-\xvec = (k-1,\ldots, k-1)$ and
 \[ \Mkx = (k-1)n,\qquad \Mtkx =  (k-1) (k-2) n.
 \]
Then, by Corollary~\ref{PROBincEXC}, 
\begin{align*}
\P(X\subseteq \mathcal{H})
    &=\frac{(kn/r)_{t} \, r!^{t}\, k^{n}  }{(kn)_{n}}  \exp\left(\frac{r-1}{2} \left(\frac{k\,(k-1)\,n}{k\,n}- \, \frac{(k-1)\,(k-2)\,n}{(k-1)\,n} \right) +O\left(\beta\right) \right),
\end{align*}
where
\[
\beta = \frac{r^4 k^3 }{(k-1)n} \, + \,\frac{t\,k^3}{(k-1)^2 n^2} \, +\, \frac{r\,t\,k^4}{(k-1)^3 n^3} =
O \left(\frac{r^4 \, k^2}{n}  \right).
\]
The number of  perfect matchings in  the complete $r$-uniform hypergraph on $n$ vertices is
\[ \frac{n!}{(n/r)!\,\, r!^{n/r}}. \]
Hence by symmetry, using (\ref{E}), we obtain
\begin{align*}
\E(Z)&= \frac{n!}{(n/r)!\,\, r!^{n/r}} \frac{(\dfrac{kn}{r})! \, (kn-n)!\, r!^{t}\, k^{n}  }{(\dfrac{kn}{r}-t)!(kn)!} \exp\left(\frac{r-1}{2}   +O\left(\frac{r^4 k^2}{n}\right) \right).
\end{align*}
The factorial terms in this formula can be expanded by applying Stirling's formula, giving error term $O(r/n)$
which is absorbed by the  error term. This completes the proof.
\end{proof}

\subsection{Loose Hamilton cycles}
Suppose that $r-1$ divides $n$. A \emph{loose Hamilton cycle} is a set of $t=\nfrac{n}{r-1}$ edges which can be labelled as $e_0, \ldots, e_{t-1}$ such that,  for some ordering $v_0, \ldots, v_{n-1}$ of the vertices, 
\[
e_i = \{ v_{i(r-1)}, v_{i(r-1)+1}, \ldots, v_{i(r-1)+(i-1)}\}.
\]
Let $C$ be a loose Hamilton cycle with edge set $\{ e_1, \ldots, e_t\}$. The degree sequence of $C$ is $\xvec =(x_1, \ldots, x_n)$ where $t$ vertices have degree $2$ and all remaining vertices have degree $1$.

When $r\geq 3$ and $k\geq 2$ are fixed integers, the number of Hamilton cycles in a random element of $\Hrkr$ has been 
studied by Altman et al in~\cite{AGIR17}.  
Let $\mathcal{H}$ be chosen uniformly at random from $\Hrkr$ and
let $Y$ be the number of loose Hamilton cycles in $\mathcal{H}$. 
Altman et~al.\ conjectured that 
\begin{align*}  
\E(Y) &= (1+o(1))\, 
  \sqrt{\frac{\pi}{2n}}(r-1) \left((k-1) (r-1) \left( \frac{rk-k-r}{rk-k}\right)^{(r-1)(rk-r-k)/r} \right)^{n/(r-1)}  \\
& \hspace*{8cm} \times \exp \left(
 \frac{(r-1) \left(rk-r -2\right) }{2 (rk-r-k)} \right),
\end{align*}
see~\cite[Remark~6.4 and Corollary~2.3]{AGIR17}.
Using Corollary~\ref{PROBincEXC} we confirm this conjecture and
extend it to the case that $k$ and $r$ grow sufficiently slowly with $n \rightarrow \infty$. 

\begin{corollary}
For $n\geq 3$, let $r=r(n) \geq 3$, $k=k(n) \geq 2$ and assume that  $r$ divides $\Mk$ and $r-1$ divides $n$ for infinitely many values of $n$.  
Let $\mathcal{H}$ be chosen uniformly at random from $\Hrkr$ and
let $Y$ be the number of loose Hamilton cycles in $\mathcal{H}$. 
If $r^4 \, k^2=o(n)$ then
\begin{align*}
 \E(Y) &= 
  \sqrt{\frac{\pi}{2n}}(r-1) \left((k-1) (r-1) \left( \frac{rk-k-r}{rk-k}\right)^{(r-1)(rk-r-k)/r} \right)^{n/(r-1)}  \\
& \qquad \quad \times \exp \left(
 \frac{(r-1) \left(rk-r -2\right) }{2 (rk-r-k)} +O\left(\frac{r^4 k^{2}}{n}  \right)
\right).
 \end{align*}
\end{corollary}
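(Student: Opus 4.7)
The plan is to mimic the argument for perfect matchings (Corollary~\ref{EZ}): write
\[
\E(Y) = \sum_{C} \P(C \subseteq \mathcal{H}) = N \cdot \P(C_0 \subseteq \mathcal{H}),
\]
where the sum is over all loose Hamilton cycles $C$ in the complete $r$-uniform hypergraph on $V$, $N$ is the number of such cycles, and $C_0$ is any fixed one (using that every loose Hamilton cycle has the same forbidden-degree sequence, so $\P(C \subseteq \mathcal{H})$ does not depend on $C$).

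First I would count $N$. A cyclic ordering $(v_0,\ldots,v_{n-1})$ of the vertices together with the rule $e_i = \{v_{i(r-1)},\ldots,v_{i(r-1)+(r-1)}\}$ produces a loose Hamilton cycle. Two orderings yield the same cycle iff they differ by a cyclic rotation by a multiple of $r-1$ (giving $t = n/(r-1)$ rotations), a reflection (factor $2$), or an internal permutation of the $r-2$ non-anchor vertices of each edge (factor $(r-2)!^t$). Hence
\[
N = \frac{n!}{2t\,(r-2)!^{t}}.
\]
Next I would record the degree sequence $\xvec$ of $C_0$: exactly $t$ coordinates equal to $2$ and $n-t$ equal to $1$. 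Therefore $\kvec-\xvec$ has $t$ coordinates equal to $k-2$ and $n-t$ equal to $k-1$, giving
\[
\Mkx = \frac{n(kr-k-r)}{r-1}, \qquad \Mtkx = \frac{n(k-2)(kr-k-r-1)}{r-1},
\]
and $\prod_{i} (k_i)_{x_i} = k^{n}(k-1)^{t}$. Verifying that $\beta = O(r^{4}k^{2}/n)$ is a quick check under the hypothesis $r^{4}k^{2} = o(n)$, so Corollary~\ref{PROBincEXC} applies.

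Then I would plug these quantities into Corollary~\ref{PROBincEXC}. For the exponential factor, setting $A = kr-k-r$, a short algebraic simplification yields
\[
\frac{r-1}{2}\left(\frac{\Mtk}{\Mk}-\frac{\Mtkx}{\Mkx}\right)
= \frac{r-1}{2}\cdot\frac{A+k-2}{A}
= \frac{(r-1)(kr-r-2)}{2(kr-k-r)},
\]
which is exactly the constant appearing in the target formula. It remains to combine the prefactors: after multiplying $N$ by
\[
\frac{(kn/r)_{t}\,r!^{t}\,k^{n}(k-1)^{t}}{(kn)_{rt}}
= \frac{(kn/r)!\,(kn-rt)!\,r!^{t}\,k^{n}(k-1)^{t}}{((kn/r)-t)!\,(kn)!},
\]
I would apply Stirling's formula to each factorial. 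The factor $r!^{t}/(r-2)!^{t} = (r(r-1))^{t}$ combines with the remaining $k$- and $(k-1)$-terms to produce $\bigl((k-1)(r-1)\bigr)^{n/(r-1)}$ and the $\bigl((kr-k-r)/(kr-k)\bigr)^{(r-1)(kr-r-k)/r\,\cdot\,n/(r-1)}$ factor, while the surviving $\sqrt{\cdot}$-ratios collapse to $\sqrt{\pi/(2n)}\,(r-1)$ after using $t = n/(r-1)$.

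The main obstacle is the bookkeeping in this last step: the Stirling expansions involve five factorials whose arguments all scale linearly with $n$, and one must track their leading polynomial powers, check that the relative $\Theta(1/n)$ Stirling corrections are absorbed by $O(r^{4}k^{2}/n)$, and reorganise the exponentials to isolate precisely $\bigl((k-1)(r-1)((kr-k-r)/(kr-k))^{(r-1)(kr-r-k)/r}\bigr)^{n/(r-1)}$. Once this bookkeeping is done, the stated asymptotic for $\E(Y)$ follows directly.
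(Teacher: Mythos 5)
Your proposal is correct and follows essentially the same route as the paper: fix one loose Hamilton cycle, compute $\Mkx$, $\Mtkx$ and $\prod_i(k_i)_{x_i}$, apply Corollary~\ref{PROBincEXC}, multiply by the count $n!/(2t\,(r-2)!^{t})$ of loose Hamilton cycles, and finish with Stirling's formula; all intermediate quantities (including the simplification of the exponent to $(r-1)(rk-r-2)/(2(rk-r-k))$ and the bound $\beta=O(r^4k^2/n)$ via $rk/(rk-r-k)=O(1)$) agree with the paper's. The only difference is cosmetic notation ($k^n(k-1)^t$ versus $(k(k-1))^tk^{(r-2)t}$, and $n!/(2t(r-2)!^t)$ versus $(r-1)n!/(2n(r-2)!^t)$), so there is nothing to add.
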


\begin{proof}
Let $X$ be a fixed loose Hamilton cycle with $t=\nfrac{n}{r-1}$ edges and degree sequence $\xvec$. 
With $\kvec=(k,\ldots, k)$, we have
 \[
 \Mkx= (rk-r-k) \,t, \qquad 
\Mtkx = (k-2)_2  \,t +  (r-2)(k-1)_2 \,t.
 \]   
By Corollary~\ref{PROBincEXC}, the probability that a random hypergraph in $\Hrkr$ contains $X$ is
\[ 
\frac{(kn/r)_{t} \, r!^{t}\, \prod_{i=1}^{n} (k)_{x_i}  }{(kn)_{rt}}  \exp\left(
    \frac{r-1}{2}\,\left(\frac{\Mtk}{\Mk}-  \frac{\Mtkx}{\Mkx} \right)+ O\left( \beta \right)\right),
\]
where 
\begin{align*}
\beta &=
\frac{r^4 k^3}{(rk-r-k)\,t} + \frac{k^3}{(rk-r-k)^2\, t} + \frac{r\, k^4}{(rk-r-k)^3\, t^2}
=O\left( \frac{r^4 k^2}{n}\right),
\end{align*}
using the fact that $rk/(rk-r-k) = O(1)$.
Next, we have
 \begin{align*}
 \dfrac{1}{2}\,\left(\frac{\Mtk}{\Mk}-  \frac{\Mtkx}{\Mkx} \right) = \frac{(r-1) \left(rk-r -2\right) }{2 (rk-r-k)}.
 \end{align*}
The number of loose Hamilton cycles in the complete $r$-uniform hypergraph on $n$ vertices is 
\[\frac{( r-1)\,\, n! }{2n \, (r-2)!^t}.
\]
Therefore, by symmetry,
  \begin{align}\label{EHam}
 \E(Y) &=   \frac{( r-1)\,\, n! }{2n \, (r-2)!^t} \, \frac{(kn/r)! \, r!^{t}\, (kn-rt)!\, 
\prod_{i=1}^{n} (k)_{x_i}  }{(kn/r-t)!\, (kn)!}  \nonumber \\
 & \hspace*{3cm} \times \exp \left(
 \frac{(r-1) \left(rk-r -2\right) }{2 (rk-k-r)} +O\left(\frac{r^4 k^2}{n} \right)
\right).
 \end{align}
 Observe that $\prod_{i=1}^{n} (k)_{x_i}= \, \left(k(k-1)\right)^t \, k^{(r-2)t}$. 
 The factor outside the exponential can be estimated using Stirling's formula, giving
 \begin{align*}
 &\frac{(r-1) \,  n! \,  (k(k-1))^t \,\, k^{(r-2)t}
   r!^{t}\, (kt(r-1)-rt)! \,   ((kt(r-1)/r)! }{2n \, (r-2)!^t \,  (kt(r-1)/r-t)! \,\, (kt(r-1))!} 
 \\[12pt]
&= \sqrt{\frac{\pi}{2n}}(r-1) \left((k-1) \, (r-1) \,\,\left( \frac{rk-k-r}{rk-k}\right)^{(r-1)(rk-k-r)/r} \right)^{t} \,
  \exp\left(O\left(\frac{1}{n} +\frac{r}{kn-rt}\right)\right).
 \end{align*}
 The proof is completed by combining this expression with~(\ref{EHam}), since the error term from (\ref{EHam}) dominates.

 \end{proof}

\subsubsection*{Acknowledgments}
We are grateful to Brendan McKay and the anonymous referees for their helpful suggestions which have simplified our
argument.


\begin{thebibliography}{99}
\bibitem{AGIR17} 
D.~Altman, C.~Greenhill, M.~Isaev and R.~Ramadurai, 
A threshold result for loose Hamiltonicity in random regular uniform hypergraphs, \href{https://arxiv.org/abs/1611.09423}{\tt arXiv:1611.09423}.

\bibitem{AMPS}
F.~Amato, V.~Moscato, A.~Picariello and G.~Sperli,
Multimedia social network modeling: a proposal,
in Proceedings of Tenth IEEE International Conference on Semantic Computing (ISCS),
IEEE, Piscataway, NJ, 2016, pp.~448--453.

\bibitem{BarHar13}
A. Barvinok and J. A. Hartigan, 
The number of graphs and a random graph with a given degree sequence, 
\emph{Random Structures and Algorithms} 
{\bf 3(42)} (2013), 301--348.

\bibitem{BliGre16a}
V.~Blinovsky and C.~Greenhill,
Asymptotic enumeration of sparse uniform hypergraphs with given degrees,
\emph{European Journal of Combinatorics} {\bf 51} (2016), 287--296.


\bibitem{BliGre16b}
V.~Blinovsky and C.~Greenhill,
Asymptotic enumeration of sparse uniform linear hypergraphs with given degrees,
\emph{The Electronic Journal of Combinatorics} {\bf23(3)} (2016), \#P3.17.

\bibitem{CFMR96}
C.~Cooper, A.~Frieze, M.~Molloy, B.~Reed, 
Perfect matchings in random $r$-regular, $s$-uniform hypergraphs, 
\emph{Combinatorics, Probability and Computing} {\bf 5} (1996), 1--14.

\bibitem{DBRL12} 
A.~Ducournau, A.~Bretto, S.~Rital, B.~Laget, 
A reductive approach to hypergraph clustering: An application to image segmentation, 
\emph{Pattern Recognition}  
{\bf 45} (2012), 2788--2803.

\bibitem{DFRS13}
A.~Dudek, A.~Frieze, A.~Ruci\'nski and M.~\v Sileikis, 
Approximate counting of regular hypergraphs, 
\emph{Information Processing Letters}
{\bf 113} (2013), 785--788.

\bibitem{EFKO18} 
A.~Espuny~D{\'i}az, F.~Joos, D.~K\"uhn and D.~Osthus, 
Edge correlations in random regular hypergraphs and applications to subgraph testing, \href{https://arxiv.org/abs/1803.09223}{\tt arXiv:1803.09223}.

\bibitem{KHT}
S.~Klamt, U.-U.~Haus and F.~Theis, Hypergraphs and cellular networks,
\emph{PLoS Computational Biology} {\bf 5} (2009), e1000385.


\bibitem{KLP17}
G.~Kuperberg, S.~Lovett and R.~Peled, 
Probablistic existence of regular combinatorial structures, 
\emph{Geometric and Functional Analysis} 
{\bf 27(4)} (2017), 919--972.

\bibitem{LBAA}
J.\,M.~Levine, J.~Bascompte, P.\,B.~Adler and S.~Allesina,
Beyond pairwise mechanisms of species coexistence in complex communities,
\emph{Nature} {\bf 546} (2017), 56--64.

 \bibitem{LieWor17} 
 A.~Liebenau, N.~Wormald, Asymptotic enumeration of graphs by degree sequence and the degree sequence of a random graph,  \href{https://arxiv.org/abs/1702.08373}{\tt arXiv:1702.08373}.

\bibitem{Mck85}
B.~D.~McKay, 
Asymptotics for symmetric 0-1 matrices with prescribed row sums,
\emph{Ars Combinatoria} {\bf 19A} (1985), 15--25.

\bibitem{Mck11}
B.~D.~McKay, Subgraphs of dense random graphs with specified degrees,
\emph{Combinatorics, Probability and Computing} {\bf 20} (2011), 413--433


\bibitem{MckWor91}
B.~D.~McKay, N.~C.~Wormald,  
Asymptotic enumeration by degree sequence of graphs with degrees $o(n^{1/ 2})$,
\emph{Combinatorica} {\bf 11}(4) (1991), 369--382.
 
\bibitem{MTH}
T.~Morimae, Y.~Takeuchi and M.~Hayashi,
Verification of hypergraph states, \emph{Physical Review A} {\bf 96} (2017), 062321.

\bibitem{ZZKKW}
Y.~Zhu, X.~Zhu, M.~Kim, D.~Kaufer and G.~Wu,
A novel dynamic hyper-graph inference framework for computer assisted diagnosis of neuro-diseases, in \emph{Information Processing in Medical Imaging},
Springer Lecture Notes in Computer Science, Springer International Publishing, 2017.

\end{thebibliography}
\end{document}